	\newcommand{\ZZ}{\mathbb{Z}}
	\newcommand{\Oo}{\mathscr{O}}
	\DeclareMathOperator{\chara}{char}
	\DeclareMathOperator{\Pic}{Pic}
	\DeclareMathOperator{\Cl}{Cl}
	\DeclareMathOperator{\Div}{Div}
	\newcommand{\isom}{\simeq}
	\renewcommand{\phi}{\varphi}
	\renewcommand{\epsilon}{\varepsilon}
	\renewcommand{\tilde}{\widetilde}
	\newtheorem*{theorem*}{Theorem}
	\title{A note on the Frobenius morphism on toric varieties}
	\author{Piotr Achinger}
\begin{document}

\maketitle
\begin{abstract}
We give a new, shorter computation of Frobenius push-forwards of line bundles on
toric varieties. 
\end{abstract}

Frobenius push-forwards of line bundles on smooth toric varieties were calculated
by Thomsen \cite{Thomsen} -- it was observed that they split into direct sums
of line bundles. A more intrinsic approach has been developed in \cite{Bogvad}.
The algorithm of Thomsen has been adapted to the case of the toric Frobenius morphism
in characteristic zero in \cite{CostaMiroRoig} and \cite{MichalekLason}.
In this note, we present a new proof of these results. This gives
the shortest known computation even for the projective spaces (in which case
one uses the Horrocks splitting criterion and the projection formula to prove
that the direct image splits). The key point of our approach is to consider
Frobenius push-forwards of all line bundles at once.

Let $X$ be a toric variety over an algebraically closed field $k$. 
We denote the torus acting on $X$ by $T$ and the number of rays of the fan
defining $X$ by $r$. If
$\chara k = p > 0$, we have the \emph{honest} (absolute) Frobenius morphism $F_a:X\to X$.
In fact, $X$ can be identified with its Frobenius twist $X^{(1)}$ and then $F_a$
can be seen as the quotient by the Frobenius kernel $K_a$ of $T$.
In any case, for any integer $\ell>0$ we have the \emph{fake} (toric) Frobenius
morphism $F_\ell : X\to X$ which corresponds to taking the quotient by the kernel
$K_\ell$ of the $\ell$-th power map on $T$. Let $F$ be the honest or a fake Frobenius
(in the honest case we define $\ell = p$)
in the following theorem:

\begin{theorem*} Let $D\in\Cl X$. Then
\begin{equation}\label{dec}
	 F_* \Oo_X(D) = \bigoplus_{E \in \Cl X} \Oo_X(E)^{\oplus m(E, D)}, 
\end{equation}
where the multiplicity $m(E, D)$ equals
the number of points in the cube $\{0, 1, \ldots, \ell-1\}^r\subseteq \ZZ^r = \Div_T X$
representing the class $D-\ell E\in\Cl X$ (that is, the number of 
$T$-divisors in $|D-\ell E|$ with coefficients less than $\ell$).
\end{theorem*}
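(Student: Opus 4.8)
The plan is to compute all of the push-forwards $F_*\Oo_X(D)$ simultaneously by passing to the Cox ring of $X$. Write $S=\bigoplus_{F\in\Cl X}H^0(X,\Oo_X(F))$, and recall that $S\cong k[x_1,\dots,x_r]$ as a $\Cl X$-graded ring, where $x_i$ is the canonical section of $\Oo_X(D_i)$ ($D_i$ being the invariant prime divisor of the $i$-th ray) and $\deg x_i=[D_i]$, and that sheafification is an exact functor $M\mapsto\widetilde M$ from $\Cl X$-graded $S$-modules to quasi-coherent $\Oo_X$-modules with $\widetilde{S(E)}=\Oo_X(E)$, under which a coherent sheaf $\mathcal G$ is recovered from its module of twisted global sections $\bigoplus_{E\in\Cl X}H^0(X,\mathcal G\otimes\Oo_X(E))$. (I take $X$ to have no torus factor, which is harmless, so that these statements hold as stated.) Since, under the identification $X\cong X^{(1)}$, the absolute Frobenius becomes the fake Frobenius with $\ell=p$, it is enough to treat $F=F_\ell$; I will use that $F^*\Oo_X(E)=\Oo_X(\ell E)$ for every $E$ (because $F^*D_i=\ell D_i$), equivalently $F^*x_i=x_i^\ell$, so that $F$ is induced on Cox rings by $\phi\colon S\to S$, $x_i\mapsto x_i^\ell$.

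\emph{Step 1: the module of twisted sections of $\mathcal G:=F_*\Oo_X(D)$.} Since $F$ is finite, $F_*$ is exact, and the projection formula gives
\begin{equation*}
	H^0(X,\mathcal G\otimes\Oo_X(E))=H^0\bigl(X,F_*(\Oo_X(D)\otimes F^*\Oo_X(E))\bigr)=H^0(X,\Oo_X(D+\ell E))=S_{D+\ell E};
\end{equation*}
tracing through the projection formula, the module structure is the one in which $x_i\in S_{[D_i]}$ acts by multiplication by $\phi(x_i)=x_i^\ell$. In other words $\mathcal G=\widetilde M$, where $M$ is the ring $S$ with scalars restricted along $\phi$, regraded so that its degree-$E$ piece is $S_{D+\ell E}$.

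\emph{Step 2: decomposition of $M$.} Each exponent vector $a\in\ZZ_{\ge0}^r$ is written uniquely as $a=c+\ell b$ with $c\in C:=\{0,1,\dots,\ell-1\}^r$ and $b\in\ZZ_{\ge0}^r$, hence $S=\bigoplus_{c\in C}x^c\cdot\phi(S)$. Transported into $M$: for each $c\in C$ with $D-[c]\in\ell\Cl X$ and each class $E$ with $\ell E=D-[c]$, the monomial $x^c$ lies in $M$ in degree $-E$ and generates a free graded submodule isomorphic to $S(E)$; over all such pairs $(c,E)$ these submodules decompose $M$ (for a fixed $c$ the various admissible $E$ differ by $\ell$-torsion classes of $\Cl X$ and occupy distinct graded pieces). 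Sheafifying,
\begin{equation*}
	F_*\Oo_X(D)=\bigoplus_{\substack{c\in C,\ E\in\Cl X\\ \ell E=D-[c]}}\Oo_X(E)=\bigoplus_{E\in\Cl X}\Oo_X(E)^{\oplus m(E,D)},\qquad m(E,D)=\#\{c\in C:[c]=D-\ell E\},
\end{equation*}
which is \eqref{dec}; as a check, the total rank is $\#\{c\in C:D-[c]\in\ell\Cl X\}\cdot|(\Cl X)[\ell]|=\ell^{\dim X}=\deg F$.

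The one genuinely delicate point is the grading bookkeeping in Step 2: the $\Cl X$-grading of $M$, indexed by $E$, and the grading of $S$ by monomial degree are tied together by the $\ell$-to-one map $E\mapsto D+\ell E$, and one must resist fixing a single class $E$ with $\ell E=D-[c]$ — summing over all of them is exactly what produces the correct multiplicities $m(E,D)$ and, when $\Cl X$ has $\ell$-torsion, the mutually non-isomorphic line bundles that then appear. The remaining ingredients — the graded-module/sheaf dictionary for toric varieties, the projection formula, the identity $F^*D_i=\ell D_i$, and the monomial decomposition of $S$ — are routine.
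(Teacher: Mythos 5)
Your route is genuinely different from the paper's, and where it is fully justified it is arguably cleaner. The paper first splits $F_*\Oo_X(D)$ into line bundles using the action of the diagonalizable kernel of $F|_T$, and only then pins down the multiplicities by a Hilbert-series computation; the identity $S(x)=M(x)\cdot S(x^\ell)$ there is precisely the shadow, on graded dimensions, of your decomposition $S=\bigoplus_{c\in C}x^c\cdot\phi(S)$ of the Cox ring. Your argument produces the splitting and the multiplicities in one stroke, needs no a priori equivariant splitting, and does not require $X$ to be complete (the paper's generating functions need $h^0<\infty$, which is one reason it reduces to the complete case). The bookkeeping in your Step 2 — including the $\ell$-torsion multiplicities and the rank check — is correct.

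The gap is in Step 1, and it is exactly the case the paper's final reduction exists to handle: singular $X$. When $E\in\Cl X$ is not Cartier, $\Oo_X(E)$ is a reflexive sheaf that is not invertible, and several identifications you use silently are then unavailable: $F^*\Oo_X(E)$ is in general neither reflexive nor isomorphic to $\Oo_X(\ell E)$ (only its reflexive hull is, since $F$ is not flat off the smooth locus), $\Oo_X(D)\otimes\Oo_X(\ell E)$ need not be $\Oo_X(D+\ell E)$ nor even torsion-free, so the computation $H^0(X,\mathcal G\otimes\Oo_X(E))=S_{D+\ell E}$ and the identification of the $S$-module structure with $M$ are unjustified; and the recovery $\mathcal G\cong\widetilde{\Gamma_*\mathcal G}$ is a theorem for smooth (with care, simplicial) toric varieties, not for arbitrary ones. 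Two ways to close this: (i) bypass $\Gamma_*$ and the projection formula altogether and check directly on the affine cover $\{U_\sigma\}$ that $F_*\widetilde N\cong\widetilde{N_\phi}$, where $N_\phi$ is the regraded restriction of scalars along $\phi$ — this is a computation with $(N_{x^{\hat\sigma}})_0$ and makes your Step 2 apply verbatim to every toric variety without torus factors; or (ii) run your argument for smooth $X$ only and append the paper's reduction via a toric resolution and a completion of the fan. A last small remark: the hypothesis you call harmless is in fact necessary — for $X=T$ itself the cube count would give $F_*\Oo_X=\Oo_X$ rather than $\Oo_X^{\oplus\ell^{\dim T}}$ — so the theorem should be read as implicitly assuming no torus factors, which your setup (and the paper's reduction to the complete case) does.
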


\begin{proof}

Let us first prove the theorem in the case when $X$ is smooth and complete, and
reduce to this case afterwards. 

First of all, we remark that the push-forward $F_* \Oo_X(D)$ of a $T$-equivariant
line bundle $\Oo_X(D)$ is a direct sum of line bundles. Indeed, the 
kernel $K$ of $F$ on $T$ (equal to $K_a$ or $K_\ell$) is a finite diagonalizable commutative group scheme
acting on $F_* \Oo_X(D)$ and the eigensheaves are line
bundles. Since every line bundle on $X$ is equivariant, we get a decomposition
as in (\ref{dec}) and we only want to compute the multiplicities.

Observe that $m(E, D)$ depends only on $D - \ell E$: by the projection
formula we have $(F_* \Oo_X(D))\otimes \Oo_X(-E) = F_*(\Oo_X(D - \ell E))$,
so $m(E, D) = m(0, D-\ell E)$. Denote $m(0, D)$ simply by $m(D)$ and 
apply $h^0(-)$ to both sides of (\ref{dec}):
\begin{equation} \label{h0}
	 h^0(D) = h^0(F_* \Oo_X(D)) = \sum_{E \in \Pic X} m(E, D)\cdot h^0(E) = \sum_{E \in \Pic X} m(D - \ell E)\cdot h^0(E). 
\end{equation}

We want to use some generating functions, so we fix a basis $D_1, \ldots, D_\rho$
of $\Pic X$ such that the effective cone lies in the positive orthant and define
\[ S(x_1, \ldots, x_\rho) = \sum_{a\in \ZZ^\rho} h^0\left(\sum a_i D_i\right) x^a \quad\text{and}\quad M(x_1, \ldots, x_n) = \sum_{a\in \ZZ^\rho} m\left(\sum a_i D_i\right) x^a. \]
Then (\ref{h0}) just states that $S(x_1, \ldots, x_\rho) = M(x_1, \ldots, x_\rho)\cdot S(x_1^\ell, \ldots, x_\rho^\ell)$.

Let us compute the series $S$. Consider the map $L : \ZZ^r = \Div_T X \to \Pic X = \ZZ^\rho$
(the first identification being given by the basis of ,,ray'' divisors, the second by $D_1, \ldots, D_\rho$)
taking a $T$-divisor to its class.
Because $h^0(D)$ equals the number of effective $T$-divisors linearly equivalent to $D$, we get
\[ S(x_1, \ldots, x_\rho) = \sum_{b\in \ZZ_{\geq 0}^r} x^{L(b)} = \prod_{i=1}^r \frac{1}{1 - x^{L(e_i)}}, \]
$e_1, \ldots, e_r$ being the basis in $\ZZ^r$. Therefore
\[ M(x_1, \ldots, x_\rho) = \frac{S(x_1, \ldots, x_\rho)}{S(x_1^\ell,\ldots, x_\rho^\ell)} = \prod_{i=1}^r \frac{1-x^{\ell L(e_i)}}{1-x^{L(e_i)}}
 = \prod_{i=1}^r (1 + x^{L(e_i)} + x^{2L(e_i)} +\ldots + x^{(\ell -1)L(e_i)}), \]
hence $m(D)$ is the number of points $p = \sum a_i e_i$ with
$0 \leq a_i < \ell$ and $L(p) = D$.

\medskip
We turn to the case $X$ not necessarily smooth nor complete. First, by taking
an appropriate subdivision of the fan we get a toric resolution of singularities
$\pi: \tilde X \to X$. Next, by adding extra cones,
we embed $i: \tilde X\to \bar X$ into a smooth complete toric variety $\bar X$.
Using the diagrams
\[ 
\begin{tikzpicture}[description/.style={fill=white,inner sep=2pt}]
    \matrix (m) [matrix of math nodes, row sep=1.0em,
    column sep=2.5em, text height=1.5ex, text depth=0.25ex]
    {      & \ZZ^{\bar r - \tilde r} & \ZZ^{\bar r - \tilde r} \\
	M  & \Div_T \bar X           & \Pic \bar X \\
	M  & \Div_T \tilde X           & \Pic \tilde X \\ };
    \path[->,font=\scriptsize]
    (m-1-2) edge node[auto] {$ \isom $} (m-1-3);
    \path[right hook->,font=\scriptsize]
    (m-2-1) edge node[auto] {$ $} (m-2-2);
    \path[->>,font=\scriptsize]
    (m-2-2) edge node[auto] {$ $} (m-2-3);
    \path[right hook->,font=\scriptsize]
    (m-3-1) edge node[auto] {$ $} (m-3-2);
    \path[->>,font=\scriptsize]
    (m-3-2) edge node[auto] {$ $} (m-3-3);
    \path[->,font=\scriptsize]
    (m-2-1) edge node[auto] {$ \isom $} (m-3-1);
    \path[right hook->,font=\scriptsize]
    (m-1-2) edge node[auto] {$ $} (m-2-2);
    \path[->>,font=\scriptsize]
    (m-2-2) edge node[auto] {$ $} (m-3-2);
    \path[right hook->,font=\scriptsize]
    (m-1-3) edge node[auto] {$ $} (m-2-3);
    \path[->>,font=\scriptsize]
    (m-2-3) edge node[auto] {$ $} (m-3-3);
\end{tikzpicture}
\quad
\begin{tikzpicture}[description/.style={fill=white,inner sep=2pt}]
    \matrix (m) [matrix of math nodes, row sep=1.0em,
    column sep=2.5em, text height=1.5ex, text depth=0.25ex]
    {   \,\\ 
	\text{and} \\
	\,\\  };
\end{tikzpicture}
\quad
\begin{tikzpicture}[description/.style={fill=white,inner sep=2pt}]
    \matrix (m) [matrix of math nodes, row sep=1.0em,
    column sep=2.5em, text height=1.5ex, text depth=0.25ex]
    {      & \ZZ^{\tilde r - r} & \ZZ^{\tilde r - r} \\
	M  & \Div_T \tilde X           & \Pic \tilde X \\
	M  & \Div_T X           & \Cl X \\ };
    \path[->,font=\scriptsize]
    (m-1-2) edge node[auto] {$ \isom $} (m-1-3);
    \path[right hook->,font=\scriptsize]
    (m-2-1) edge node[auto] {$ $} (m-2-2);
    \path[->>,font=\scriptsize]
    (m-2-2) edge node[auto] {$ $} (m-2-3);
    \path[right hook->,font=\scriptsize]
    (m-3-1) edge node[auto] {$ $} (m-3-2);
    \path[->>,font=\scriptsize]
    (m-3-2) edge node[auto] {$ $} (m-3-3);
    \path[->,font=\scriptsize]
    (m-3-1) edge node[auto] {$ \isom $} (m-2-1);
    \path[->>,font=\scriptsize]
    (m-2-2) edge node[auto] {$ $} (m-1-2);
    \path[right hook->,font=\scriptsize]
    (m-3-2) edge node[auto] {$ $} (m-2-2);
    \path[->>,font=\scriptsize]
    (m-2-3) edge node[auto] {$ $} (m-1-3);
    \path[right hook->,font=\scriptsize]
    (m-3-3) edge node[auto] {$ $} (m-2-3);
\end{tikzpicture}
\]
it is easy to see that if the theorem holds for $\bar X$, then it also holds
for $\tilde X$ and for $X$ (the variety $X$ being normal).
\end{proof}

\noindent {\bf Acknowledgements. } 
The author would like to thank Nathan Ilten, Mateusz Michałek, Nicolas Perrin
and Jarosław Wisniewski for valuable suggestions. This work was supported by the Hausdorff Center for
Mathematics.

\medskip
\noindent {\sc Piotr Achinger \\
Hausdorff Center for Mathematics,
Universit{\"a}t Bonn \\
Villa Maria \\
Endenicher Allee 62\\
53115 Bonn, Germany} \\

\noindent {\sc E-mail:} \texttt{piotr.achinger@hcm.uni-bonn.de}

\end{document}